\theoremstyle{plain}
    \newtheorem{theorem}{Theorem}[section]
    \newtheorem{lemma}[theorem]{Lemma}
\theoremstyle{definition}
    \newtheorem{definition}{Definition}[section]
    \newtheorem{remark}{Remark}[section]
    \newtheorem*{acknowledgement}{Acknowledgement}
\theoremstyle{remark}
\numberwithin{equation}{section}
\renewcommand{\l}{\left}
\renewcommand{\r}{\right}
\newcommand{\cleq}{\lesssim}
\newcommand{\eps}{\varepsilon}
\def\norm#1{\left\Vert #1 \right\Vert} 
\newcommand{\C}{\mathbb{C}}
\newcommand{\R}{\mathbb{R}}
\DeclareMathOperator{\im}{Im}
\DeclareMathOperator{\re}{Re}
\begin{document}

\title[Blow-up of NLS system without mass-resonance]{Blow-up of the radially symmetric solutions for the quadratic nonlinear Schr\"{o}dinger system without mass-resonance}

\author[T.Inui]{Takahisa Inui}
\address{Department of Mathematics, Graduate School of Science, Osaka University, Toyonaka, Osaka, 560-0043, Japan}
\email{inui@math.sci.osaka-u.ac.jp}

\author[N.Kishimoto]{Nobu Kishimoto}
\address{Research Institute for Mathematical Sciences, Kyoto University, Kyoto, 606-8502, Japan}
\email{nobu@kurims.kyoto-u.ac.jp}

\author[K.Nishimura]{Kuranosuke Nishimura}
\address{Department of Mathematics, Graduate School of Science, Tokyo University of Science, 1-3 Kagurazaka, Shinjuku-ku, Tokyo 162-8601, Japan}
\email{1117614@ed.tus.ac.jp}

\date{\today}
\keywords{Mass-resonance, virial identity, blow-up, grow-up, radial symmetry, quadratic Schr\"{o}dinger system}
\subjclass[2010]{35Q55, 35B44, 35B34}
\maketitle

\begin{abstract}
We consider the quadratic nonlinear Schr\"{o}dinger system
\begin{align*}
	\begin{cases}
		i\partial_t u + \Delta u = v \overline{u}, 
		\\
		i\partial_t v+\kappa \Delta v = u^2, 
	\end{cases}
	\text{ on } I \times \R^d,
\end{align*}
where $1\leq d \leq 6$ and $\kappa>0$. In the lower dimensional case $d=1,2,3$, it is known that the $H^1$-solution is global in time. On the other hand, there are finite time blow-up solutions when $d=4,5,6$ and $\kappa=1/2$. The condition of $\kappa=1/2$ is called mass-resonance. In this paper, we prove finite time blow-up under radially symmetric assumption when $d=5,6$ and $\kappa \neq 1/2$ and we show blow-up or grow-up when $d=4$. 
\end{abstract}

\tableofcontents


\section{Introduction}
\subsection{Background}

We consider
\begin{align}
\label{NLS'}
	\begin{cases}
	i\partial_t u+\frac{1}{2m}\Delta u=\lambda \overline{u}v,
	\\
	i\partial_t v+\frac{1}{2M}\Delta v=\mu u^2,
	\end{cases}
	\text{ on } I \times \R^d,
\end{align}
where $1\leq d \leq 6$, $(u,v)$ is a $\C^2$-valued unknown function, and $m,M>0$, $\lambda ,\mu \in \C \setminus \{0\}$ are constants.
From physical viewpoint, \eqref{NLS'} is related to the Raman amplification in a plasma. See \cite{CoCoOh09} for details. 
The equation \eqref{NLS'} is invariant under the scaling $\alpha^2(u,v)(\alpha^2t,\alpha x)$ for $\alpha >0$. From this point of view, the critical regularity of the Sobolev space is $s_c=d/2-2$. Therefore, the equation \eqref{NLS'} is called $L^2$-subcritical if $d\leq3$, $L^2$-critical if $d=4$, $\dot{H}^{1/2}$-critical if $d=5$, and $\dot{H}^1$-critical if $d=6$. If $\lambda =c\bar{\mu}$ for some $c>0$, then the mass and the energy are conserved.
In this paper, we focus on the $L^2$-critical and $L^2$-supercritical case with conservation laws, \textit{i.e.}, $d=4,5,6$ and $\lambda =c\bar{\mu}$.
By considering the equation for $(\sqrt{c}|\mu |u(t,x/\sqrt{2m}),c\bar{\mu}v(t,x/\sqrt{2m}))$, we may assume $m=1/2$, $\lambda =\mu =1$.
Thus, we consider the following quadratic nonlinear Schr\"{o}dinger system: 
\begin{align}
\label{NLS}
\tag{NLS}
	\begin{cases}
		i\partial_t u + \Delta u = v \overline{u}, \qquad \text{ on } I \times \R^d,
		\\
		i\partial_t v+\kappa \Delta v = u^2,  \qquad \text{ on } I \times \R^d,
		\\
		(u(0),v(0))=(u_0,v_0) \in H^1(\R^d)\times H^1(\R^d),
	\end{cases}
\end{align}
where $\kappa>0$. The equation \eqref{NLS} has two conserved quantities, \textit{i.e.}, the mass and the energy, which are defined by 
\begin{align}
	\tag{Mass} 
	M(u,v)&:= \norm{u}_{L^2}^2 + \norm{v}_{L^2}^2,
	\\
	\tag{Energy}
	E(u,v)&:= \norm{\nabla u}_{L^2}^2 + \frac{\kappa}{2} \norm{\nabla v}_{L^2}^2 + \re \int_{\R^d} \overline{v}u^2 dx.
\end{align}
The local well-posedness in $H^1(\R^d) \times H^1(\R^d)$ for $1\leq d \leq 6$, the global well-posedness in $H^1(\R^d) \times H^1(\R^d)$ for $1\leq d \leq 3$ ($L^2$-subcritical), and the existence of the ground sate standing wave solutions for $1\leq d \leq 6$ were shown by Hayashi, Ozawa, and Tanaka \cite{HaOzTa13}. We recall the ground state. When $1\leq d \leq 5$, the system \eqref{NLS} has a standing wave solution of the form
\begin{align}
\label{eq1.2}
	(u,v)=( e^{it}\phi (x), e^{2it}\psi (x))
\end{align}
with $\R$-valued functions $\phi ,\psi$.
In fact, if \eqref{eq1.2} is a solution of \eqref{NLS}, then $(\phi ,\psi )$ should satisfy the following system of elliptic equations 
\begin{equation}
\label{SE}
\begin{cases}
	-\phi +\Delta \phi =\phi\psi ,& \text{ in  } \R^d,
	\\
	-2\psi +\kappa \Delta \psi =\phi ^2, & \text{ in  } \R^d.
\end{cases}
\end{equation}
On the other hand, when $d=6$, the system \eqref{NLS} has a static solution of the form
\begin{align}
\label{eq1.4}
	(u,v)=( \phi (x), \psi (x))
\end{align}
with $\R$-valued functions $\phi ,\psi$.
In fact, if \eqref{eq1.4} is a solution of \eqref{NLS}, then $(\phi ,\psi )$ should satisfy the following system of elliptic equations 
\begin{equation}
\label{SE*}
\begin{cases}
	\Delta \phi =\phi\psi ,& \text{ in  } \R^6,
	\\
	\kappa \Delta \psi =\phi ^2, & \text{ in  } \R^6.
\end{cases}
\end{equation}
The solutions of these elliptic equations \eqref{SE} and \eqref{SE*} can be characterized by the variational argument. Namely, the minimal mass-energy solutions exist and they are called ground states. Roughly speaking, they are characterized by the Pohozaev functional $K$, which is defined by
\begin{align*}
	K( u, v )
	=K_{d}( u, v )
	:= \norm{\nabla u}_{L^2}^2+\frac{\kappa}{2} \norm{\nabla v}_{L^2}^2 
	+\frac{d}{4}\re \int_{\R^d} \overline{v}u^2dx.
\end{align*}
We note that $K(\phi,\psi)=0$ if $(\phi ,\psi )$ is a solution of \eqref{SE} or \eqref{SE*}. Hayashi, Li, and Ozawa \cite{HaLiOz11} investigated the small data scattering. Recently, scattering below the ground state was also obtained by Hamano \cite{Ham18p} when $d=5$ and the authors \cite{InKiNi18p} when $d=4$, where scattering means that the solution of nonlinear system \eqref{NLS} approches to a free solution to the Schr\"{o}dinger equations as time goes to infinity. 

Moreover, the blow-up phenomena of the solutions to \eqref{NLS} with $\kappa=1/2$ is also investigated by many researchers. 
When $d=4$, Hayashi, Ozawa, and Tanaka proved that the solution of \eqref{NLS} starting from any initial data $(u_0,v_0)\in (H^1(\R^4)\cap L^2(\R ^4,|x|^2dx))^2=: \Sigma(\R^4)$ with $E(u_0,v_0)<0$ must blow up in finite time (\cite{HaOzTa13}). See also \cite{Din18p} for the blow-up of the radial solutions.  Recently, when $d=5$, Hamano \cite{Ham18p} proved that the solution with $(u_0,v_0) \in \Sigma(\R^5)$ or with radial symmetry blows up if the initial data satisfies $E(u_0,v_0)M(u_0,v_0)<E(\phi,\psi)M(\phi,\psi)$ and $K(u_0,v_0)<0$. He also showed the blow-up or grow-up result for non-radial solutions under $E(u_0,v_0)M(u_0,v_0)<E(\phi,\psi)M(\phi,\psi)$ and $K(u_0,v_0)<0$. 
These blow-up results were obtained under the mass-resonance condition, \textit{i.e.}, $\kappa=1/2$, since the virial identity is similar to the corresponding single nonlinear Schr\"{o}dinger equation.  
In this paper, we are interested in the blow-up phenomena when $\kappa\neq 1/2$ and $d=4,5,6$. In this case, we have to control an extra term which does not appear when $\kappa=1/2$. 


\subsection{Main results}

In this section, we give main results in this paper. 
We obtain the following blow-up result for the radial solutions when $d=5,6$.

\begin{theorem}
\label{thm1.1}
Let $d=5,6$, $\kappa \neq 1/2$, and $(\phi,\psi)$ be a ground state. Assume that $(u_0,v_0) \in H^1(\R^d) \times H^1(\R^d)$ is radially symmetric and satisfies 
\begin{itemize}
\item[(A$_5$)] if $d=5$, $E(u_0,v_0)M(u_0,v_0)<E(\phi,\psi)M(\phi,\psi)$ and $K(u_0,v_0)<0$,
\item[(A$_6$)] if $d=6$, $E(u_0,v_0)<E(\phi,\psi)$ and $K(u_0,v_0)<0$.
\end{itemize}
Then, the solution must blow up in both time directions. 
\end{theorem}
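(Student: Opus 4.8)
The plan is to run a concavity/virial argument on a suitably localized version of the variance, following the strategy of Ogawa--Tsutsumi and Hamano, but with the extra complication coming from $\kappa\neq 1/2$. First I would set up the localized virial quantity: fix a radial cutoff $\chi\in C_c^\infty(\R^d)$ with $\chi(x)=|x|^2$ near the origin, $\chi(x)=0$ for $|x|\geq 2$, and the usual bounds $|\nabla\chi|^2\lesssim\chi$, $\Delta\chi\lesssim 1$, $\Delta^2\chi\lesssim 1$, and define, for a scaling parameter $R>0$,
\begin{align*}
	I_R(t):=\int_{\R^d}R^2\chi\!\l(\tfrac{x}{R}\r)\l(|u(t,x)|^2+\tfrac{1}{\kappa}|v(t,x)|^2\r)dx.
\end{align*}
The weight $|u|^2+\kappa^{-1}|v|^2$ (rather than the mass density $|u|^2+|v|^2$) is the natural one here: its time derivative produces a Morawetz-type quantity whose second derivative is, up to controllable error, a multiple of the Pohozaev functional $K(u,v)$, and this is precisely what makes the argument insensitive to the value of $\kappa$. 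Computing, one gets $I_R'(t)=2\im\int \nabla\chi(\tfrac{x}{R})\cdot(\bar u\nabla u+\bar v\nabla v)\,dx$ and then
\begin{align*}
	I_R''(t)=8K(u(t),v(t))+A_R(u(t),v(t)),
\end{align*}
where $A_R$ collects all the terms that vanish in the limit $\chi(x/R)\to|x|^2$: gradient terms supported on $R\le|x|\le 2R$ with coefficient involving $\chi''-2$ or $\Delta\chi/d$, the bi-Laplacian term $R^{-2}\int\Delta^2\chi(\tfrac\cdot R)(|u|^2+\kappa^{-1}|v|^2)$, and a nonlinear error $\int(\Delta\chi(\tfrac\cdot R)-2d)\re(\bar v u^2)$ supported on $R\le|x|\le 2R$.

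The heart of the matter is to show that under assumptions $(A_5)$ or $(A_6)$ the main term is strictly negative and dominates $A_R$. For the main term I would invoke the variational characterization of the ground state already recalled in the introduction: the conditions $E(u_0,v_0)M(u_0,v_0)<E(\phi,\psi)M(\phi,\psi)$, $K(u_0,v_0)<0$ (for $d=5$), respectively $E(u_0,v_0)<E(\phi,\psi)$, $K(u_0,v_0)<0$ (for $d=6$), are invariant under the flow and imply, by the standard concavity-of-$\lambda\mapsto$ (rescaled energy) argument, that there is $\delta>0$ with $K(u(t),v(t))\leq-\delta$ for all $t$ in the maximal interval, and moreover that $\norm{\nabla u(t)}_{L^2}^2+\norm{\nabla v(t)}_{L^2}^2$ is bounded below by a multiple of $|K(u(t),v(t))|$ (in the $d=6$ scaling-invariant case one uses the static ground state and the fact that $K<0$ forces the gradient norm above the threshold). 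For the error term $A_R$, the radial symmetry is essential: the gradient pieces of $A_R$ are supported in $|x|\ge R$ where the radial Strauss/Gagliardo--Nirenberg inequality gives $\norm{u}_{L^\infty(|x|\ge R)}\lesssim R^{-(d-1)/2}\norm{u}_{L^2}^{1/2}\norm{\nabla u}_{L^2}^{1/2}$, so the nonlinear error is bounded by $C R^{-(d-1)}\norm{\nabla u}_{L^2}^{?}\cdots$, which is $o(\norm{\nabla u}^2)$ for $d\ge 5$ (here $d-1\ge 4$ is what we need), while the bi-Laplacian term is $\lesssim R^{-2}M(u_0,v_0)$, a fixed constant times $R^{-2}$. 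After a Young's inequality to absorb fractional powers of the gradient norm, one obtains $A_R(u(t),v(t))\le \tfrac12\,\eps\l(\norm{\nabla u(t)}_{L^2}^2+\norm{\nabla v(t)}_{L^2}^2\r)+C_\eps R^{-2}$ uniformly in $t$; choosing $\eps$ small relative to the coercivity constant and then $R$ large gives
\begin{align*}
	I_R''(t)\le 4K(u(t),v(t))\le -4\delta<0
\end{align*}
for all $t$ in the maximal existence interval.

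Finally, from $I_R\ge 0$ and $I_R''(t)\le-4\delta$ one concludes that the maximal existence time must be finite in both directions: if the forward maximal time $T_{\max}=+\infty$, then $I_R(t)$ would become negative for $t$ large, a contradiction, and the blow-up alternative for the $H^1$ local theory (Hayashi--Ozawa--Tanaka) then forces $\liminf_{t\to T_{\max}}(\norm{\nabla u(t)}_{L^2}+\norm{\nabla v(t)}_{L^2})=+\infty$, i.e. finite-time blow-up; the same argument applied to $t\mapsto I_R(-t)$ handles the negative time direction. The main obstacle, and the place where genuine work beyond the $\kappa=1/2$ case is needed, is the error estimate on $A_R$: with $\kappa\neq1/2$ there is no cancellation making $I_R''$ exactly $8K$ on the whole space, so one must carry the cutoff throughout, and the reason the theorem is restricted to $d=5,6$ (rather than $d=4$, where only blow-up-or-grow-up is claimed) is exactly that for $d=4$ the radial decay rate $R^{-(d-1)}=R^{-3}$ together with the $L^2$-critical scaling is not quite enough to close the absorption uniformly in time — one only gets that $I_R$ cannot stay bounded, yielding grow-up as the alternative to blow-up.
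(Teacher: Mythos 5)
There is a genuine gap, and it sits at the central point of the paper. Your key identity $I_R''(t)=8K(u(t),v(t))+A_R(u(t),v(t))$, with $A_R$ consisting only of cutoff errors, is false when $\kappa\neq 1/2$. Differentiating the weighted mass density gives
\begin{align*}
	\partial_t\Bigl(|u|^2+\tfrac{1}{2\kappa}|v|^2\Bigr)
	=-2\nabla\cdot\im\Bigl[\bar u\nabla u+\tfrac12\bar v\nabla v\Bigr]
	+\Bigl(2-\tfrac{1}{\kappa}\Bigr)\im[v\bar u^2],
\end{align*}
and no constant reweighting of $|v|^2$ removes the non-divergence term while keeping the momentum density $\bar u\nabla u+\tfrac12\bar v\nabla v$ whose time derivative produces $8K$ (killing the residual forces equal weights on $|u|^2$ and $|v|^2$, which destroys the $\tfrac12$ needed in the momentum). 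Consequently $I_R'$ contains the extra piece $\bigl(2-\tfrac{1}{\kappa}\bigr)\int\chi\,\im[v\bar u^2]$, and $I_R''$ contains its time derivative, which is neither small, nor sign-definite, nor supported in $|x|\geq R$; it cannot be absorbed into your $A_R$. This is exactly the obstruction the paper identifies, and "carrying the cutoff throughout" does not address it: the cutoff errors are the easy part, the extra term is the hard part. Your final step, concluding from $I_R\geq0$ and $I_R''\leq-4\delta$, therefore rests on an identity you do not have.

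The paper's way around this is to abandon the variance entirely and work only with the first-order Morawetz quantity $J(t)=2\int\nabla\chi\cdot\im[\bar u\nabla u+\tfrac12\bar v\nabla v]\,dx$, for which $J'=8K+(\text{cutoff errors})$ does hold, with the errors controlled by the radial Sobolev inequality and the variational lemmas exactly as you propose. The price is that $J$ is not the derivative of a nonnegative quantity, so the simple concavity conclusion is unavailable and must be replaced by a two-step ODE argument: from $J'\leq-2\delta$ one gets $J(t)\leq-\delta t$, which combined with $|J(t)|\lesssim RM^{1/2}L(t)^{1/2}$ (where $L=\|\nabla u\|_{L^2}^2+\tfrac{\kappa}{2}\|\nabla v\|_{L^2}^2$) forces $L(t)\gtrsim t^2$; then $J'\leq-\tfrac{d-4}{2}L$ for large $t$ yields, for $\xi(t)=\int_{T_1}^tL$, the inequality $\xi'\geq A\xi^2$, which cannot hold globally. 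Note this last step uses $d-4>0$, which is the paper's actual reason the argument is confined to $d=5,6$ (at $d=4$ one has $8K=8E$ with no coercive $-L$ term), rather than the radial decay rate you cite. If you repair your argument by replacing $I_R$ with $J$ and supplying this ODE mechanism, the rest of your outline (radial Sobolev on $|x|\geq R$, Young's inequality, Lemmas \ref{lemA.1} and \ref{lemA.2} for $K\leq-\delta$) matches the paper.
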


\begin{remark}
After the submission of this paper, the authors have learned that Yoshida obtained a similar blow-up result in his unpublished paper \cite{Yos13}. He considered the corresponding three-component NLS system without the mass-resonance condition and proved the finite time blow-up of radially symmetric solutions with negative energy when $d=5,6$. Part of our argument in the proof of Theorem \ref{thm1.1} is in fact very similar to his. One of our contribution is to show blow-up under the variational setting, which means we do not need to assume negative energy, and thus the strong instability of the radial ground states is also shown. See also \cite{OzSu13} for the blow-up of solutions with negative energy for the corresponding three-component NLS system with mass-resonance and interesting blow-up phenomena for other nonlinear Schr\"{o}dinger systems without mass-resonance.
\end{remark}

Before stating second main result, we give the definition of grow-up.

\begin{definition}[Grow-up]
We say that the solution $(u,v)$ grows up in positive (negative) time direction if the solution exists globally in positive (negative) time direction and 
\begin{align*}
	\limsup_{t \to \infty(-\infty)} \norm{\nabla u(t)}_{L^2} = \infty
	\quad \text{ and } \quad 
	\limsup_{t \to \infty(-\infty)} \norm{\nabla v(t)}_{L^2} = \infty
\end{align*}
\end{definition}

We obtain the blow-up or grow-up result when $d=4$.  

\begin{theorem}
\label{thm1.2}
Let $d=4$, $\kappa \neq 1/2$. Assume that $(u_0,v_0) \in H^1(\R^d) \times H^1(\R^d)$ is radially symmetric and satisfies
$E(u_0,v_0)<0$,
then, the solution $(u,v)$ with the initial data $(u_0,v_0)$ blows up or grows up in both time directions. 
\end{theorem}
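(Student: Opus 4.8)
The plan is to run a convexity/virial argument on a suitably truncated weight, exactly as one does for the $L^2$-critical NLS, but carefully absorbing the extra term produced by $\kappa\neq 1/2$ using radial symmetry. Let $\chi\in C_c^\infty(\R^4)$ be a standard radial cutoff with $\chi(x)=|x|^2$ for $|x|\le 1$, and for $R>0$ set $\phi_R(x)=R^2\chi(x/R)$ and $V_R(t)=\int_{\R^4}\phi_R(|x|)\bigl(|u(t,x)|^2+2\kappa^{-1}\cdot\tfrac{\kappa}{2}|v(t,x)|^2\bigr)\,dx$; more precisely one takes the weighted quantity whose first derivative reproduces the Morawetz/virial action adapted to the system (the natural weights are $|u|^2$ for the first component and a multiple of $|v|^2$ for the second, the multiple being dictated by making the cross terms combine into $\re\int\bar v u^2$). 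First I would compute $\frac{d}{dt}V_R$ and then $\frac{d^2}{dt^2}V_R$. On the region $|x|\le R$ the weight is exactly $|x|^2$, so there $\frac{d^2}{dt^2}V_R$ equals the standard virial quantity, which for $d=4$ reduces (using the $L^2$-criticality, i.e. that the scaling vector field annihilates the potential term up to the right constant) to a multiple of the energy, namely $\tfrac{d^2}{dt^2}V_R\big|_{|x|\le R}= c\,E(u_0,v_0)$ with $c>0$. On the region $|x|>R$ one gets error terms controlled by $\int_{|x|>R}\bigl(|\nabla u|^2+|\nabla v|^2\bigr)\,dx$ together with a nonlinear remainder $\int_{|x|>R}|u|^2|v|\,dx$, plus — and this is the new feature — a term proportional to $(1-2\kappa)$ coming from the mismatch of the two Laplacians, of the schematic form $(1-2\kappa)\int_{|x|>R}\nabla\phi_R\cdot\im(\bar v\nabla v - \text{something})$, again bounded by the gradient mass on $|x|>R$ and a cubic term there.

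The key step is to control these exterior errors. Here radial symmetry enters through the radial Sobolev (Strauss) inequality: for radial $f$ on $\R^4$, $\||x|^{3/2}f\|_{L^\infty}\lesssim \|f\|_{L^2}^{1/2}\|\nabla f\|_{L^2}^{1/2}$, which gives decay of $u,v$ away from the origin. Using this, the cubic exterior term $\int_{|x|>R}|u|^2|v|\,dx$ is bounded by $R^{-9/2}$ times powers of $\|u\|_{L^2},\|v\|_{L^2},\|\nabla u\|_{L^2},\|\nabla v\|_{L^2}$, hence by $\eta(\|\nabla u\|_{L^2}^2+\|\nabla v\|_{L^2}^2)+C_\eta R^{-\text{(large)}}$ for any $\eta>0$ by Young's inequality, using mass conservation to fix the $L^2$ norms. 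Likewise the linear-in-gradient exterior terms (including the $\kappa$-term) are crudely bounded by $\int_{|x|>R}(|\nabla u|^2+|\nabla v|^2)\,dx\le \|\nabla u\|_{L^2}^2+\|\nabla v\|_{L^2}^2$ times a constant that we cannot make small — so the naive estimate only gives $\tfrac{d^2}{dt^2}V_R\le cE(u_0,v_0)+C(\|\nabla u\|_{L^2}^2+\|\nabla v\|_{L^2}^2)$, which is useless by itself. The resolution is the standard dichotomy used for $L^2$-critical NLS grow-up: assume toward a contradiction that the solution is global forward in time and does \emph{not} grow up, i.e. $\liminf_{t\to\infty}(\|\nabla u(t)\|_{L^2}+\|\nabla v(t)\|_{L^2})<\infty$ — note the statement's "blow up or grow up" conclusion means we only need to rule out this case; then along a sequence $t_n\to\infty$ the gradients stay bounded, which combined with the exterior radial decay lets us choose $R$ large (depending on that bound) so that on that time sequence $\tfrac{d^2}{dt^2}V_R\le \tfrac{c}{2}E(u_0,v_0)<0$.

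The remaining work is to convert this into a contradiction. Since $E(u_0,v_0)<0$ is strictly negative and conserved, and $\tfrac{d^2}{dt^2}V_R(t)\le c E(u_0,v_0)+(\text{errors bounded by gradient mass})$, I would argue as in the Ogawa--Tsutsumi / Martel scheme: either the gradients blow up in finite time (done — finite-time blow-up, which is one of the two alternatives), or they are bounded on $[0,\infty)$, in which case $R$ can be fixed once and for all so that $\tfrac{d^2}{dt^2}V_R(t)\le \tfrac{c}{2}E(u_0,v_0)<0$ for all $t\ge 0$; then $V_R(t)$ is a nonnegative quantity that is eventually negative, the contradiction. The one genuine obstacle is bookkeeping the $\kappa$-dependent extra term: one must verify that it carries a gradient factor localized to $|x|>R$ (so it is an exterior error and not a bulk term), rather than contributing to the interior $|x|\le R$ computation — this is true because on $|x|\le R$ the weight is exactly $|x|^2$, whose Hessian is the identity, so the anisotropy between $\Delta$ and $\kappa\Delta$ produces $\operatorname{tr}$ of (identity) and cancels in the same algebraic way it does in the mass-resonant case; only the cutoff region $R<|x|<2R$, where $\nabla^2\phi_R\neq c\,\mathrm{Id}$, sees the $(1-2\kappa)$ coefficient. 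Establishing that cancellation on $|x|\le R$ cleanly is the technical heart; everything else is the familiar radial $L^2$-critical virial machinery.
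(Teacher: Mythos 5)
There is a genuine gap, and it sits exactly at what you yourself call ``the technical heart.'' You claim that the $(1-2\kappa)$ extra term only appears on the cutoff region $R<|x|<2R$, because on $|x|\le R$ the weight is exactly $|x|^2$ and its Hessian is a multiple of the identity. This is false. The extra term does not come from the Hessian of the weight at all: it comes from the failure of \emph{individual} mass conservation. Computing from the equations, $\partial_t\bigl(|u|^2+\tfrac{1}{2\kappa}|v|^2\bigr)=-2\nabla\cdot\im\bigl[\bar u\nabla u+\tfrac12\bar v\nabla v\bigr]+\bigl(2-\tfrac1\kappa\bigr)\im[v\bar u^2]$, so
\begin{align*}
\frac{d}{dt}V_R(t)=2\int\nabla\chi\cdot\im\Bigl[\bar u\nabla u+\tfrac12\bar v\nabla v\Bigr]\,dx+\Bigl(2-\tfrac1\kappa\Bigr)\int\chi\,\im[v\bar u^2]\,dx,
\end{align*}
and the second term carries the weight $\chi$ itself (equal to $|x|^2$ on the ball), not a derivative of $\chi$. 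It is a bulk term, present already in the full-space virial identity when $\kappa\neq 1/2$; its time derivative is not controlled pointwise in time by the energy or the gradient norms. This is precisely the obstruction that distinguishes the non-mass-resonant case and is the entire point of the paper, so your proposed scheme of ``$\tfrac{d^2}{dt^2}V_R=cE+\text{exterior errors}$'' does not close as written.

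The paper's way around this is worth noting, because it is a genuinely different mechanism from the concavity argument you propose: it never differentiates $V_R$ twice. Instead it works directly with the localized momentum $J(t)=2\int\nabla\chi\cdot\im[\bar u\nabla u+\tfrac12\bar v\nabla v]\,dx$, whose time derivative equals $8K=8E$ plus localization errors supported in $|x|\ge R$ (handled, as you suggest, by radial Sobolev and mass conservation) and contains \emph{no} $\kappa$-dependent extra term. Since $J$ is not the derivative of a nonnegative quantity, the endgame changes: under the no-grow-up assumption one fixes $R$ so that $J'(t)\le 4E<0$, hence $J(t)\le 2Et$ for large $t$, while the crude bound $|J(t)|\lesssim R\,M(u,v)^{1/2}\|\nabla(u,v)\|_{L^2}$ keeps $J$ bounded --- contradiction. (Your concavity scheme could in principle be rescued by integrating the extra term in time, since it is a total derivative, but that is a different argument from the one you wrote, and the claimed interior cancellation must in any case be abandoned.) A minor further point: the correct negation of grow-up is boundedness of the gradients on all of $[0,\infty)$ (one component's boundedness implying the other's via energy conservation and Gagliardo--Nirenberg), not merely $\liminf<\infty$ along a sequence; a second-derivative bound holding only on a sequence of times yields no concavity contradiction.
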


\begin{remark}
In Theorems \ref{thm1.1} and \ref{thm1.2}, the assumption $\kappa\neq1/2$ is not needed, that is, we do not use $\kappa \neq 1/2$ and thus we can apply our proofs to the case of $\kappa=1/2$. See \cite{HaOzTa13,Ham18p,Din18p} for the other proofs of blow-up when $\kappa=1/2$. 
\end{remark}

\begin{remark}
These results also trivially mean the instability of the radial ground states, especially their strong instability when $d=5,6$. 
The strong instability of the ground state means the existence of a finite time blow-up solution starting from an arbitrarily small neighborhood of the ground state. Since the ground state has strictly positive energy when $d=5,6$, its strong instability does not follow from the blow-up result for solutions with negative energy. Meanwhile, we can easily find the initial data satisfying (A$_5$) or (A$_6$) in an arbitrary neighborhood of the ground state by rescaling it. We also remark that, in the $d=5$, radial, and mass-resonance case, strong instability of the ground state for the system \eqref{NLS} follows from the aforementioned result by Hamano \cite{Ham18p}.
\end{remark}


\subsection{Idea of Proof} We recall the blow-up result for the corresponding single NLS
\begin{align*}
	i \partial_t u + \Delta u +|u|u=0, \quad \text{ on } I \times \R^d,
\end{align*}
where $4\leq d \leq 6$. 
By the virial identity 
\begin{align*}
	\frac{d^2}{dt^2} \l( \int_{\R^d} |x|^2 |u(t,x)|^2 dx \r) =8 \l( \norm{\nabla u(t)}_{L^2}^2 - \frac{d}{6} \norm{u(t)}_{L^{3}}^{3}\r),
\end{align*}
whose right hand side corresponds to the Pohozaev functional, 
Glassey \cite{Gla77} showed the blow-up when $u_0 \in H^1(\R^d) \cap L^2(\R^d,|x|^2dx)$. Even if $u_0 \notin H^1(\R^d) \cap L^2(\R^d,|x|^2dx)$, Ogawa and Tsutsumi \cite{OgTs91} proved blow-up by a localized virial identity under radial symmetry. See \cite{HoRo10, AkNa13,DuWuZh16} for recent progress. For the system \eqref{NLS}, we have the following virial identity
\begin{align*}
	&\frac{d^2}{dt^2} \l(\int_{\R^d} |x|^2 |u(t,x)|^2 dx + \int_{\R^d} \frac{1}{2\kappa} |x|^2 |v(t,x)|^2 dx \r)
	\\
	&=\frac{d}{dt} \l\{ 4  \im \int x \cdot \l( \bar{u}\nabla u + \frac{1}{2} \bar{v}\nabla v \r) dx  +\left(2-\frac{1}{\kappa}\right) \im  \int_{\R^d}  |x|^2 v(t,x)\overline{u(t,x)}^2 dx  \r\}	
	\\
	&=8K(u(t),v(t)) + \l(2-\frac{1}{\kappa}\r) \frac{d}{dt} \im   \int_{\R^d}  |x|^2 v(t,x)\overline{u(t,x)}^2 dx. 
\end{align*}
Ozawa and Sunagawa obtained such a virial identity in \cite[Appendix A]{OzSu13}. 
If $\kappa=1/2$, the extra term (the second term of the last) does not appear and thus the similar contradiction argument to that for the single NLS works well. On the other hand, the extra term appears in the virial identity when $\kappa\neq 1/2$. 

When $d=5,6$, in the proof of Theorem \ref{thm1.1}, we proceed the argument without the extra term. We do not treat the extra term since we only use a localized version of 
\begin{align}
\label{eq1.6}
	\frac{d}{dt} \l\{ 4  \im \int x \cdot \l( \bar{u}\nabla u + \frac{1}{2} \bar{v}\nabla v \r) dx \r\} =8K(u(t),v(t)).
\end{align}
We use radial symmetry only to control the error term which comes from the localization. When $d=5,6$, we expect that $K$ behaves like $-(\| \nabla u  \|_{L^2}^2 + \frac{\kappa}{2}  \| \nabla v \|_{L^2}^2)$ since $8K(u,v)= 2d E(u,v) - 2(d-4) (  \| \nabla u  \|_{L^2}^2 + \frac{\kappa}{2}  \| \nabla v \|_{L^2}^2 )$ if the $\dot{H}^1$-norms are large. From this observation, we can derive a contradiction by a localized version of \eqref{eq1.6}. 
When $d=4$, we have no such expectation since $K(u,v)= E(u,v)$. Therefore, we can only show the blow-up or grow-up result in the $L^2$-critical case.

\section{Proof}

To prove blow-up results, we use the virial argument. 
We define
\begin{align*}
	V(t):= \int_{\R^d} \chi(x) |u(t,x)|^2 dx + \int_{\R^d} \frac{1}{2\kappa} \chi(x) |v(t,x)|^2 dx
\end{align*}
for a smooth function $\chi:\R^d \to \R_{\geq0}$.
By simple calculations, we get the following. 

\begin{lemma}[Localized virial identity]
\label{lem2.1}
We have the following. 
\begin{align*}
	V'(t)
	&=2\int_{\R^d} \nabla \chi (x) \im[\bar{u}\nabla u 
	+ \frac{1}{2} \overline{v}\nabla v](x) dx 
	+\l(2-\frac{1}{\kappa}\r) \int_{\R^d} \chi (x) \im[v\bar{u}^2] (x)dx,
	\\
	V''(t)
	&=
	4 \int_{\R^d} \chi_{jk} (x)\re [\overline{u}_j u_k + \frac{\kappa }{2} (\overline{v}_j v_k)] (x) dx 
	-\int_{\R^d} [\Delta \Delta \chi](x) [|u|^2 + \frac{\kappa}{2}|v|^2 ](x) dx  
	\\
	&\qquad +\int_{\R^d} [\Delta \chi](x) \operatorname{Re}[v\bar{u}^2](x) dx 
	+  \left(2-\frac{1}{\kappa}\right)\frac{d}{dt} \int_{\R^d} \chi (x) \im[v\bar{u}^2] (x) dx.
\end{align*}
\end{lemma}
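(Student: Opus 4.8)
The plan is to compute $V'(t)$ and $V''(t)$ directly from the equations \eqref{NLS}, treating the two integrals $\int \chi |u|^2$ and $\int \frac{1}{2\kappa}\chi|v|^2$ separately and then combining. First I would differentiate in $t$ and substitute $\partial_t u = i\Delta u - iv\bar u$ and $\partial_t v = i\kappa\Delta v - iu^2$. For the $u$-part, $\frac{d}{dt}\int \chi|u|^2 = 2\re\int \chi \bar u \,\partial_t u = 2\re\int \chi\bar u(i\Delta u - iv\bar u)$; the term $\re\int \chi\bar u(-iv\bar u) = \re\bigl(-i\int\chi v\bar u^2\bigr) = \im\int\chi v\bar u^2$, and the term $2\re\int i\chi\bar u\Delta u$ is integrated by parts to produce $-2\im\int\nabla\chi\cdot\bar u\nabla u$ after discarding the $\im\int\chi|\nabla u|^2 = 0$ piece. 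For the $v$-part, the analogous computation on $\int\frac{1}{2\kappa}\chi|v|^2$ yields $-\int\nabla\chi\cdot\im[\bar v\nabla v]$ together with $-\frac{1}{\kappa}\int\frac{1}{2}\chi\,\im[u^2\bar v] = \frac{1}{2\kappa}\int\chi\,\im[v\bar u^2]$ (using $\im[u^2\bar v]=-\im[v\bar u^2]$). Collecting, the Laplacian contributions combine into $2\int\nabla\chi\cdot\im[\bar u\nabla u + \tfrac12\bar v\nabla v]$ (up to sign conventions on $\im[\bar u\nabla u]$ versus $\im[\bar u_j u_k]$), and the nonlinear contributions combine into $\bigl(1+\tfrac{1}{2\kappa}\bigr)\int\chi\,\im[v\bar u^2]$; a short check of constants should turn the coefficient into $2-\tfrac1\kappa$ as stated. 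I would double-check the exact sign by writing $\im\int\nabla\chi\cdot\bar u\nabla u = -\tfrac12\int\Delta\chi\,|u|^2 + \int\nabla\chi\cdot\im[\bar u\nabla u]$ type identities carefully, but this is routine.

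For $V''(t)$ I would differentiate the expression for $V'(t)$ again. The last term, $\bigl(2-\tfrac1\kappa\bigr)\frac{d}{dt}\int\chi\,\im[v\bar u^2]$, is simply carried along unevaluated — this is why it appears verbatim in the statement. So the real work is differentiating $W(t):=2\int\nabla\chi\cdot\im[\bar u\nabla u+\tfrac12\bar v\nabla v]$. Again split into the $u$-piece and $v$-piece. For the $u$-piece, $\frac{d}{dt}\int\nabla\chi\cdot\im[\bar u\nabla u] = \int\nabla\chi\cdot\im[\overline{\partial_t u}\,\nabla u + \bar u\nabla\partial_t u]$; substituting $\partial_t u = i\Delta u - iv\bar u$ and integrating by parts, the $\Delta u$ terms generate the standard Morawetz/virial bilinear form $\int\chi_{jk}\re[\bar u_j u_k]$ and the biharmonic error $-\tfrac14\int\Delta\Delta\chi\,|u|^2$ (the classical computation: $\frac{d}{dt}\int\nabla\chi\cdot\im[\bar u\nabla u]$ with the free-Schrödinger part equals $2\int\chi_{jk}\re[\bar u_j u_k] - \tfrac12\int\Delta^2\chi\,|u|^2$), while the $v\bar u$ terms generate $\int\Delta\chi\,\re[v\bar u^2]$ after integrating by parts. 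The $v$-piece is handled identically with $\kappa$ inserted and contributes $\kappa\int\chi_{jk}\re[\bar v_j v_k]$ (hence the $\tfrac\kappa2$ once the overall factor $\tfrac12$ is included), the biharmonic error $-\tfrac\kappa4\int\Delta^2\chi\,|v|^2$ wait — I should track that the $v$-potential term is $\tfrac1{2\kappa}\int\Delta^2\chi\cdot\kappa$-type bookkeeping; combining, the biharmonic terms assemble into $-\int\Delta^2\chi[|u|^2+\tfrac\kappa2|v|^2]$ and the nonlinear terms into $\int\Delta\chi\,\re[v\bar u^2]$. Multiplying $W'$ by the overall factor $2$ converts $2\int\chi_{jk}\re[\bar u_j u_k]$ into the stated $4\int\chi_{jk}\re[\bar u_j u_k+\tfrac\kappa2\bar v_j v_k]$, and similarly rescales the other terms to match the statement.

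The main obstacle is purely bookkeeping: keeping the constants and signs straight through the two separate integrations by parts (one for $V'$, a second, more delicate one for $V''$), in particular correctly combining the $u$- and $v$-contributions so that the potential-type coefficient $\tfrac1{2\kappa}$ in the definition of $V$ conspires with the $\kappa$ from the dispersion of $v$ to give the clean combinations $|u|^2+\tfrac\kappa2|v|^2$ and $\bar u_j u_k + \tfrac\kappa2\bar v_j v_k$, and so that the cross term $v\bar u^2$ collects with coefficient exactly $2-\tfrac1\kappa$ in $V'$ (and is left undifferentiated in $V''$). There are no analytic subtleties: $\chi$ is smooth, and for the identity to hold rigorously one only needs enough regularity/decay on $(u,v)$, which holds for $H^1$ solutions after the standard approximation argument (or one simply notes the identity is formal and will be justified in the localized form used later). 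I would present the computation compactly, doing the $u$-part in detail and stating that the $v$-part is identical up to the obvious substitutions.
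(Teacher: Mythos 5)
Your plan is correct and is essentially the paper's own proof: differentiate $V$, substitute the two equations, and integrate by parts, so that the dispersive parts give the momentum-density term $2\int\nabla\chi\cdot\im[\bar u\nabla u+\tfrac12\bar v\nabla v]$ and the two nonlinearities combine (using $\im[u^2\bar v]=-\im[v\bar u^2]$) into the coefficient $2-\tfrac1\kappa$, with the second derivative then following from the standard Morawetz computation while the cross term is carried along undifferentiated. When you write it out, fix the two bookkeeping slips you flagged: the dispersive term comes out with a plus sign, $+2\int\nabla\chi\cdot\im[\bar u\nabla u]$, and the nonlinear contributions are $2\im[v\bar u^2]$ from the $u$-equation and $-\tfrac1\kappa\im[v\bar u^2]$ from the $v$-equation (not $1+\tfrac1{2\kappa}$), which is exactly the stated $2-\tfrac1\kappa$.
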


\begin{proof}
Simple calculation gives us
\begin{align*}
	V' (t) 
	&= \int \chi (x)\left[  -2\partial_k \operatorname{Im}[\bar{u}u_k + \frac{1}{2} \bar{v}v_k ] + \left(2-\frac{1}{\kappa}\right) \im[v\bar{u}^2]\right] dx 
	\\
	&=2\int \nabla \chi (x) \cdot \operatorname{Im}[\bar{u}\nabla u + \frac{1}{2} \bar{v}\nabla v](x) dx +\left(2-\frac{1}{\kappa}\right) \int \chi (x) \im [v\bar{u}^2] (x) dx
	\\
	&=:J(t)+I(t)
\end{align*}
and
\begin{align*}
	J' (t) 
	&= 2 \sum_{j,k}\int \chi_k  (x)[-2 \operatorname{Re}[(\bar{u}_j u_k)_j +\frac{\kappa}{2} (\bar{v}_j v_k)_j]
	 +\frac{1}{2} \partial_{jjk} [|u|^2 + \frac{\kappa}{2} |v|^2]-\frac{1}{2} \operatorname{Re}(v\bar{u}^2)_k] dx
	\\
	&=4\sum_{j,k} \int \chi_{jk} (x)\re [\bar{u}_j u_k + \frac{\kappa }{2} \bar{v}_j v_k] (x)dx -\int [\Delta \Delta \chi](x) [|u|^2 + \frac{\kappa}{2}|v|^2 ](x) dx  
	\\
	&\qquad +\int [\Delta \chi](x) \operatorname{Re}[v\bar{u}^2](x)dx.
\qedhere
\end{align*}
\end{proof}

\begin{remark}
In this paper, we do not use $V$. We only treat $J$ and $J'$.
\end{remark}

We take a smooth function $\chi_{1}: \R_{\geq0} \to \R_{\geq 0}$ such that
\begin{align*}
	&\chi_{1}(r) 
	:=
	\begin{cases}
	r^2, & \text{ if } 0\leq r \leq 1,
	\\
	\text{positive constant}, & \text{ if } r \geq2,
	\end{cases}
	\qquad \chi_{1}'(r) 
	\begin{cases}
	= 2r, & \text{ if } 0\leq r \leq 1,
	\\
	\leq 2r, & \text{ if } 1\leq r \leq 2,
	\\
	=0, & \text{ if } r \geq2,
	\end{cases} 
	\\
	&\text{and } \chi_{1}''(r) \leq 2.
\end{align*}
For $R>0$, we take $\chi: \R^d \to \R_{\geq0}$ such that
\begin{align*}
	\chi(x)=R^2 \chi_{1}\l(\frac{|x|}{R}\r).
\end{align*}

To control error terms which comes from the localization, we use radial symmetry. For radial functions, we have the following lemma.
\begin{lemma}[Radial Sobolev inequality]
\label{lem2.2}
Let $d \geq 3$ and let $f \in \dot{H}^{1}(\R^d)$ be radially symmetric. Then, there exists a positive constant $C=C(d)$ such that
\begin{align*}
	\sup_{x \in \R^d} |x|^{\frac{d-2}{2}} |f(x)| \leq C \norm{\nabla f}_{L^2}.
\end{align*}
\end{lemma}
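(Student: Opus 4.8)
\textit{Approach.} The plan is to reduce the inequality to a one–dimensional computation in the radial variable and then apply Cauchy--Schwarz with the weight $s^{d-1}$. Writing $r=|x|$ and, abusing notation, letting $f(r)$ denote the radial profile, I would first prove the estimate for radially symmetric $f\in C_c^\infty(\R^d)$. For such $f$ the fundamental theorem of calculus gives $f(r)=-\int_r^\infty f'(s)\,ds$, hence
\begin{align*}
	|f(r)| \leq \int_r^\infty |f'(s)|\,ds = \int_r^\infty \big(|f'(s)|\,s^{\frac{d-1}{2}}\big)\,s^{-\frac{d-1}{2}}\,ds
	\leq \l(\int_r^\infty |f'(s)|^2 s^{d-1}\,ds\r)^{1/2}\l(\int_r^\infty s^{-(d-1)}\,ds\r)^{1/2}.
\end{align*}
Here the hypothesis $d\geq3$ is exactly what is needed: the last integral converges and equals $\tfrac{1}{d-2}\,r^{-(d-2)}$. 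Squaring, multiplying by $r^{d-2}$, and using $\norm{\nabla f}_{L^2}^2=\omega_{d-1}\int_0^\infty|f'(s)|^2 s^{d-1}\,ds$, where $\omega_{d-1}$ is the surface measure of $\S^{d-1}$ (recall $|\nabla f(x)|=|f'(|x|)|$ for radial $f$), one gets
\begin{align*}
	r^{d-2}|f(r)|^2 \leq \frac{1}{d-2}\int_r^\infty |f'(s)|^2 s^{d-1}\,ds \leq \frac{1}{(d-2)\,\omega_{d-1}}\norm{\nabla f}_{L^2}^2 ;
\end{align*}
taking the supremum over $r>0$ and a square root yields the claim, in fact with the explicit constant $C=C(d)=\big((d-2)\,\omega_{d-1}\big)^{-1/2}$.

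\textit{Removing the smoothness assumption.} Radial functions in $C_c^\infty(\R^d)$ are dense in the radial subspace of $\dot H^1(\R^d)$ (averaging over rotations is a bounded projection onto this subspace that preserves $C_c^\infty$). So, given radial $f\in\dot H^1$, choose radial $f_n\in C_c^\infty$ with $f_n\to f$ in $\dot H^1$. Applying the inequality already established to the differences $f_n-f_m$ shows that $|x|^{(d-2)/2}f_n$ is Cauchy in $L^\infty(\R^d)$, hence converges uniformly on $\R^d\setminus\{0\}$; since moreover $f_n\to f$ in $L^{2d/(d-2)}$ by the Sobolev embedding $\dot H^1\hookrightarrow L^{2d/(d-2)}$ valid for $d\geq3$, a subsequence converges a.e., so the uniform limit coincides a.e.\ with $|x|^{(d-2)/2}f$. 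Selecting the (continuous on $\R^d\setminus\{0\}$) representative of $f$, the inequality passes to the limit.

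\textit{Main obstacle.} The core computation is completely elementary; the only point that requires care is the density/limiting step, i.e.\ giving meaning to the pointwise values of a general $\dot H^1$ function, and that is where I would expect most of the write-up to go. One could in principle bypass even this in the present paper, since in every application of the lemma the relevant $f$ lies in $H^1(\R^d)$, for which the integral representation $f(r)=-\int_r^\infty f'(s)\,ds$ is directly justified after mollification.
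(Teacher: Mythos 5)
Your argument is correct, and it is more than the paper provides: the paper does not prove Lemma \ref{lem2.2} at all, but simply cites Cho--Ozawa \cite[Proposition 1]{ChOz09}. Your computation is the classical Strauss-type radial estimate --- fundamental theorem of calculus from infinity, Cauchy--Schwarz against the weight $s^{(d-1)/2}$, and the convergence of $\int_r^\infty s^{-(d-1)}\,ds$ for $d\geq 3$ --- and it yields the stated inequality with the explicit constant $\bigl((d-2)\,\omega_{d-1}\bigr)^{-1/2}$. The density and limiting step is handled correctly: radial $C_c^\infty$ functions are dense in the radial part of $\dot H^1$ (rotation averaging), the estimate applied to differences gives uniform convergence of $|x|^{(d-2)/2}f_n$ away from the origin, and identification of the limit via a.e.\ convergence of a subsequence in $L^{2d/(d-2)}$ is legitimate. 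Your closing observation is also apt for this paper: the lemma is only ever applied to $u(t)\in H^1(\R^d)$ with $d=4,5,6$, so the approximation step could be carried out entirely within $H^1$. The only difference from the cited source is one of generality --- Cho--Ozawa prove a family of weighted $L^\infty$ bounds with fractional Sobolev norms, of which this is the simplest instance --- so nothing is lost by substituting your self-contained proof.
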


\begin{proof}
See Cho--Ozawa \cite[Proposition 1]{ChOz09}.
\end{proof}

We prove Theorem \ref{thm1.1}. In this proof, we use Lemmas \ref{lemA.1} and \ref{lemA.2} in Appendix. 

\begin{proof}[Proof of Theorem \ref{thm1.1}]
Let $d =5$ or $6$. 
We assume that $(u_0,v_0) \in H^1(\R^d) \times H^1(\R^d)$ is radially symmetric and satisfies (A$_5$) when $d=5$ or (A$_6$) when $d=6$. We use contradiction argument. Suppose that the solution is global in positive time direction.
We set $K(t):=K(u(t),v(t))$. By the localized virial identity, Lemma \ref{lem2.1}, we get
\begin{align*}
	J'(t) = 8K(t) +\mathcal{R}_1 +\mathcal{R}_2+\mathcal{R}_3,
\end{align*}
where $\mathcal{R}_1, \mathcal{R}_2, \mathcal{R}_3$ is defined by 
\begin{align*}
	\mathcal{R}_1&:=4\int_{\R^d}\l( \chi_{1}'\l(\frac{|x|}{R}\r) \frac{R}{|x|} -2\r)\l( |\nabla u|^2+\frac{\kappa}{2} |\nabla v|^2 \r)dx,
	\\
	&\qquad +4\int_{\R^d} \l\{ \chi_{1}'' \l(\frac{|x|}{R}\r) \frac{1}{|x|^2} - \chi_{1}'\l(\frac{|x|}{R}\r) \frac{R}{|x|^3}\r\} \l( |x\cdot\nabla u|^2+\frac{\kappa}{2} |x \cdot\nabla v|^2 \r)dx
	\\
	\mathcal{R}_2&:=-\int_{\R^d} [\Delta \Delta \chi](x) [|u|^2 + \frac{\kappa}{2}|v|^2 ](x) dx,
	\\
	\mathcal{R}_3&:=\int_{\R^d} \l\{ \chi_{1}'' \l(\frac{|x|}{R}\r)  + \chi_{1}'\l(\frac{|x|}{R}\r) \frac{(d-1)R}{|x|}-2d\r\} \re [v\overline{u}^2](x)dx.
\end{align*}
First, we show $\mathcal{R}_1 \leq 0$. 
We have $|x \cdot \nabla u|=|x||\nabla u|$ and $|x \cdot \nabla v|=|x||\nabla v|$ since $u$ and $v$ are radially symmetric. Therefore, we get
\begin{align*}
	\mathcal{R}_1
	&=4\int_{\R^d}\l(\chi_{1}'' \l(\frac{|x|}{R}\r)  -2\r)\l( |\nabla u|^2+\frac{\kappa}{2} |\nabla v|^2 \r)dx
	\leq 0,
\end{align*}
since $\chi_{1}''(r) \leq 2$. 
Next, we consider $\mathcal{R}_2$. 
Since $\chi_{1}'(r) = 2r$, $\chi_{1}''(r)=2$, and $\chi_{1}^{(3)}(r)= \chi_{1}^{(4)}(r)=0$ on $r \leq 1$, we have
\begin{align*}
	[\Delta \Delta \chi](x)
	&= \chi_{1}^{(4)} \l(\frac{|x|}{R}\r)\frac{1}{R^2} + 2\chi_{1}^{(3)}\l(\frac{|x|}{R}\r) \frac{d-1}{R|x|} 
	\\
	&\quad +\frac{(d-1)(d-3)}{|x|^2} \chi_{1}''\l(\frac{|x|}{R}\r)  -\frac{(d-1)(d-3)R}{|x|^3} \chi_{1}'\l(\frac{|x|}{R}\r)
	\\
	&=0 \text{ on } |x| \leq R.
\end{align*}
Therefore, $\mathcal{R}_2$ is estimated as follows.
\begin{align*}
	\mathcal{R}_2 
	&\leq \int_{|x| \geq R} [\Delta \Delta \chi](x) [|u|^2 + \frac{\kappa}{2}|v|^2 ](x) dx
	\\
	&\cleq \int_{|x| \geq R} R^{-2} [|u|^2 + \frac{\kappa}{2}|v|^2 ](x) dx
	\\
	&\leq R^{-2}C_{\kappa}M(u,v).
\end{align*}
At last, we consider $\mathcal{R}_3$. Since 
\begin{align*}
	\chi_{1}'' \l(\frac{|x|}{R}\r)  + \chi_{1}'\l(\frac{|x|}{R}\r) \frac{(d-1)R}{|x|} = 2d \text{ if } |x| \leq R,
\end{align*}
it follows from the radial Sobolev inequality, Lemma \ref{lem2.2}, that
\begin{align*}
	\mathcal{R}_3 
	&= \int_{|x| \geq R} \l\{ \chi_{1}'' \l(\frac{|x|}{R}\r)  + \chi_{1}'\l(\frac{|x|}{R}\r) \frac{(d-1)R}{|x|}-2d\r\} \re [v\overline{u}^2](x)dx
	\\
	&\cleq \int_{|x| \geq R} |v||u|^2dx
	\\
	&\cleq R^{-\frac{d-2}{2}} \int_{|x| \geq R} |x|^{\frac{d-2}{2}}|v||u|^2dx
	\\
	&\cleq R^{-\frac{d-2}{2}} \norm{v}_{L^2} \norm{u}_{L^2} \norm{ |x|^{\frac{d-2}{2}} u}_{L^{\infty}}
	\\
	&\cleq R^{-\frac{d-2}{2}} \norm{v}_{L^2} \norm{u}_{L^2} \norm{\nabla u}_{L^2}
	\\
	&\leq R^{-\frac{d-2}{2}} C M(u,v) \norm{\nabla u}_{L^2}.
\end{align*}
Combining these estimates, we get
\begin{align}
\label{eq2.1}
	J'(t) \leq 8K(t) + R^{-2}C_{\kappa}M(u,v) +R^{-\frac{d-2}{2}} C M(u,v) \norm{\nabla u}_{L^2}.
\end{align}
From the Young inequality, it follows that
\begin{align}
\label{eq2.2}
	R^{-\frac{d-2}{2}} C M(u,v) \norm{\nabla u}_{L^2}
	\leq 2(d-4) \eps \norm{\nabla u}_{L^2}^2 + C_{\eps} R^{-(d-2)} M(u,v)^2,
\end{align}
where $\eps>0$ is a small positive constant to be determined later. We set $L(t):=\norm{\nabla u}_{L^2}^2 + \frac{\kappa}{2} \norm{\nabla v}_{L^2}^2$. By $8K(t)= 2d E(u,v) - 2(d-4)L(t)$ and \eqref{eq2.2}, we obtain
\begin{align*}
	&8K(t) + R^{-\frac{d-2}{2}} C M(u,v) \norm{\nabla u}_{L^2}
	\\
	&\leq 2d E(u,v) - 2(d-4)(1-\eps)L(t)+ C_{\eps} R^{-(d-2)} M(u,v)^2
	\\
	&= 8(1-\eps) K(t) + 2d\eps E(u,v)+ C_{\eps} R^{-(d-2)} M(u,v)^2.
\end{align*}
By the variational argument, Lemmas \ref{lemA.1} and \ref{lemA.2}, we get 
\begin{align*}
	8K(t) + R^{-\frac{d-2}{2}} &C M(u,v) \norm{\nabla u}_{L^2}
	\\
	&\leq - 8(1-\eps) \delta + 2d\eps E(u,v)+ C_{\eps} R^{-(d-2)} M(u,v)^2.
\end{align*}
Taking sufficiently small $\eps>0$, which depends on $\delta$ and $E(u,v)$, it follows that
\begin{align*}
	8K(t) + R^{-\frac{d-2}{2}} C M(u,v) \norm{\nabla u}_{L^2}
	\leq - 4 \delta + C_{\eps} R^{-(d-2)} M(u,v)^2.
\end{align*}
Therefore, by this and \eqref{eq2.1}, we have
\begin{align*}
	J'(t) 
	&\leq - 4 \delta + R^{-2}C_{\kappa}M(u,v)+ C_{\eps} R^{-(d-2)} M(u,v)^2.
\end{align*}
Taking sufficiently large $R>0$ (we fix such $R$), we get
\begin{align*}
	J'(t) \leq - 2 \delta,
\end{align*}
Integrating this on $[0,t)$, we obtain
\begin{align*}
	J(t) \leq -2\delta t  +J(0).
\end{align*}
By the direct calculation, we find that
\begin{align}
\label{eq2.3}
	|J(t)| 
	&\leq 2\int |\nabla \chi (x)| [|u||\nabla u| + \frac{1}{2} |v| |\nabla v|] dx
	\\ \notag
	&\cleq R \int \chi_{1}'\l(\frac{|x|}{R}\r) [|u||\nabla u| + \frac{1}{2} |v| |\nabla v|] dx
	\\ \notag
	&\leq C_{\kappa} R M(u,v)^{\frac{1}{2}} L(t)^{\frac{1}{2}}.
\end{align}
By these inequalities, for large $t\geq T_0$, where we take $T_0$ such that $-2\delta t +J(0) < -\delta t$ for $t \geq T_0$, it follows that
\begin{align}
\label{eq2.4}
	  \delta t \leq -J(t) = |J(t)| \leq C R M(u,v)^{\frac{1}{2}} L(t)^{\frac{1}{2}}.
\end{align}
Thus, we get
\begin{align*}
	L(t) \geq Ct^2
\end{align*}
for $t>T_0$. Return to \eqref{eq2.1}. We have 
\begin{align*}
	J'(t) \leq 2d E(u,v) - 2(d-4)L(t) + R^{-2}C_{\kappa}M(u,v) +R^{-\frac{d-2}{2}} C M(u,v) \norm{\nabla u}_{L^2}.
\end{align*}
From the Young inequality, it follows that
\begin{align*}
	J'(t) \leq  - (d-4)L(t)+ 2d E(u,v)  + R^{-2}C_{\kappa}M(u,v) +R^{-(d-2)} C M(u,v)^2.
\end{align*}
Since $L(t) \geq Ct^2$ for $t>T_0$ and $E$ does not depend on $t$, there exists sufficiently large $T_1>T_0$ such that we have
\begin{align*}
	J'(t) \leq  - \frac{d-4}{2}L(t)\quad  \text{ for $t>T_1$.  }
\end{align*}
Integrating this on $[T_1,t)$, we obtain
\begin{align*}
	J(t) \leq  - \frac{d-4}{2} \int_{T_1}^{t} L(s) ds +J(T_1).
\end{align*}
Here, since $J(T_1) \leq -\delta T_1<0$, we get 
\begin{align}
\label{eq2.5}
	  \frac{d-4}{2} \int_{T_1}^{t} L(s) ds \leq -J(t) = |J(t)| \leq C R M(u,v)^{\frac{1}{2}} L(t)^{\frac{1}{2}}.
\end{align}
We set $\xi(t):= \int_{T_1}^{t} L(s) ds$. Then, \eqref{eq2.5} means that
\begin{align*}
	A \xi(t)^2 \leq \xi'(t)
\end{align*}
where $A:=(d-4)^2/(4C^2R^2M(u,v))$.
This implies that 
\begin{align*}
	A \leq \frac{\xi'(t)}{\xi(t)^2}
\end{align*}
for $t>T_1$ since $\xi(t) >0$ for all time. Integrating this on $[T_1,t)$, we obtain
\begin{align*}
	A(t-T_1) \leq \frac{1}{\xi(T_1)} - \frac{1}{\xi(t)} \leq  \frac{1}{\xi(T_1)}.
\end{align*}
Taking the limit $t \to \infty$ derives a contradiction. This means that the solution must blow up in positive time direction. Blow-up in the negative direction can be obtained similarly. This completes the proof.
\end{proof}

Next, we prove Theorem \ref{thm1.2}.

\begin{proof}[Proof of Theorem \ref{thm1.2}]
We focus on the positive direction. 
Suppose that the solution $(u,v)$ is global in positive time direction and there exists $A\in (0,\infty)$ such that
\begin{align*}
	\sup_{t>0} \norm{\nabla u}_{L^2} <A
	\quad \text{ or } \quad
	\sup_{t>0} \norm{\nabla v}_{L^2} <A.
\end{align*}
It is easy to check that if one of them is bounded then the other is also bounded by energy conservation law and the Gagliardo--Nirenberg inequality. 
From \eqref{eq2.1} in the proof of Theorem \ref{thm1.1}, we get
\begin{align*}
	J'(t) \leq 8E(u,v) + R^{-2}C_{\kappa}M(u,v) +R^{-1} C M(u,v) A.
\end{align*}
Thus, taking sufficiently large $R$, which depends on $A$, $M(u,v)$, and $\kappa$, we obtain 
\begin{align*}
	J'(t) \leq 4E(u,v)
\end{align*}
for all time. Integrating this on $[0,t)$, we have
\begin{align*}
	J(t) \leq 4E(u,v) t + J(0).
\end{align*}
And thus, we have
\begin{align*}
	J(t) \leq 2E(u,v) t
\end{align*}
for large time. 
From this inequality and \eqref{eq2.3}, it follows that
\begin{align*}
	-2E(u,v) t  \leq -J(t) =|J(t)| \leq C_{\kappa} RM(u,v)^{\frac{1}{2}}A.
\end{align*}
Taking $t \to \infty$, we get a contradiction. 
\end{proof}

\appendix
\section{Variational argument}

We collect some lemmas from variational argument. See \cite{HaOzTa13,Ham18p} for the proofs. The proofs are similar to those for the  single NLS or for the system in the case of $\kappa=1/2$.

\noindent{\bf $\bullet$ $L^2$-critical case($d=4$).}
By \cite{HaOzTa13}, it is known that there exists at least one ground state of \eqref{SE} for $\kappa>0$. They also obtained the sharp Gagliardo--Nirenberg inequality:

It holds that for any $(u,v)\in H^1(\R ^4)^2$ 
\begin{align*}
	\l| \re \int _{\R^4}u^2(x)\overline{v(x)} dx\r| 
	\leq \l( \frac{M(u,v)}{M(\phi ,\psi )}\r) ^{1/2} \l( \norm{\nabla u}_{L^2}^2+\frac{\kappa}{2}\norm{\nabla v}_{L^2}^2\r) ,
\end{align*}
where $(\phi ,\psi )$ is a ground state of \eqref{SE}. Moreover, equality is attained by the ground state.

\noindent{\bf $\bullet$ $\dot{H}^{1/2}$-critical case($d=5$).}
The ground state $( \phi, \psi )=( \phi_1, \psi_1 )$ is characterized as follows. We define
\begin{align*}
	\mu_{\omega} &:= \inf \{S_{\omega}( f, g ) :( f, g ) \in H^1(\R^5)\times H^1(\R^5) \setminus \{(0,0)\}, K( f, g )=0\}
\end{align*}
where $K=K_5$ and
\begin{align*}
	S_{\omega}( f, g )&:=\frac{1}{2}E( f, g ) +\frac{\omega}{2} M(f, g),
\end{align*}
for $\omega>0$. Then, $\mu_{\omega}= S_{\omega}(\phi_{\omega},\psi_{\omega})$ where $(\phi_{\omega}(x),\psi_{\omega}(x))=\omega(\phi(\sqrt{\omega}x), \psi(\sqrt{\omega}x))$. It is known that $E(u_0,v_0)M(u_0,v_0)<E(\phi,\psi)M(\phi,\psi)$ holds if and only if $S_{\omega}(u_0,v_0) < S_{\omega}(\phi_{\omega},\psi_{\omega})$ for some $\omega$.
By using this and variational argument, we get the following. See \cite{Ham18p}, where $\kappa =1/2$ is treated, for the detail. 

\begin{lemma}
\label{lemA.1}
Let $d=5$. If $E(u_0,v_0)M(u_0,v_0)<E(\phi,\psi)M(\phi,\psi)$ and $K(u_0,v_0)<0$, then there exists a positive constant $\delta$ such that $K(u(t),v(t)) < -\delta$ as long as the solution exists.
\end{lemma}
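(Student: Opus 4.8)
The plan is to combine the variational characterization $\mu_\omega = S_\omega(\phi_\omega,\psi_\omega)$ recalled above with a mass-preserving rescaling. Write $I\ni 0$ for the maximal existence interval of the solution, and recall that $M$, $E$, and hence $S_\omega$ are conserved along the flow. First I would use the stated equivalence to fix $\omega>0$ with $S_\omega(u_0,v_0)<\mu_\omega$, and set $\delta:=\mu_\omega-S_\omega(u_0,v_0)>0$; by conservation, $S_\omega(u(t),v(t))=\mu_\omega-\delta$ for all $t\in I$, and since $(u_0,v_0)\neq(0,0)$ — otherwise $K(u_0,v_0)=0$ — we have $M(u(t),v(t))=M(u_0,v_0)>0$, so $(u(t),v(t))\neq(0,0)$ throughout $I$. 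Next I would check that $K(u(t),v(t))<0$ for all $t\in I$: otherwise, by continuity of $t\mapsto K(u(t),v(t))$ and $K(u_0,v_0)<0$, there is $t_1\in I$ with $K(u(t_1),v(t_1))=0$, and then the definition of $\mu_\omega$ together with $(u(t_1),v(t_1))\neq(0,0)$ forces $S_\omega(u(t_1),v(t_1))\geq\mu_\omega$, contradicting $S_\omega(u(t_1),v(t_1))=\mu_\omega-\delta$.

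The heart of the argument is to promote this to a quantitative bound. Fixing $t\in I$, I would write $(f,g):=(u(t),v(t))$, $L:=\norm{\nabla f}_{L^2}^2+\tfrac{\kappa}{2}\norm{\nabla g}_{L^2}^2>0$, $N:=\re\int_{\R^5}\overline{g}f^2\,dx$, and introduce the $L^2$-preserving rescaling $(f_\lambda,g_\lambda)(x):=(\lambda^{5/2}f(\lambda x),\lambda^{5/2}g(\lambda x))$ for $\lambda>0$. A direct computation gives $M(f_\lambda,g_\lambda)=M(f,g)$, $E(f_\lambda,g_\lambda)=\lambda^2 L+\lambda^{5/2}N$, and $K(f_\lambda,g_\lambda)=\lambda^2 L+\tfrac{5}{4}\lambda^{5/2}N$, whence $\tfrac{d}{d\lambda}S_\omega(f_\lambda,g_\lambda)=\lambda^{-1}K(f_\lambda,g_\lambda)$. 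Since $K(f,g)=L+\tfrac{5}{4}N<0$, we get $N<0$ and $|N|/L>4/5$, so $\lambda^\ast:=(4L/(5|N|))^2\in(0,1)$ is the unique zero of $\lambda\mapsto K(f_\lambda,g_\lambda)$, and $K(f_\lambda,g_\lambda)<0$ on $(\lambda^\ast,1]$. As $(f_{\lambda^\ast},g_{\lambda^\ast})\neq(0,0)$ with $K(f_{\lambda^\ast},g_{\lambda^\ast})=0$, the definition of $\mu_\omega$ gives $S_\omega(f_{\lambda^\ast},g_{\lambda^\ast})\geq\mu_\omega$. Using the elementary bound $-K(f_\lambda,g_\lambda)=\lambda^2(-L+\tfrac{5}{4}\lambda^{1/2}|N|)\leq\lambda^2(-K(f,g))$ for $\lambda\in[\lambda^\ast,1]$, I would then conclude
\begin{align*}
\delta = \mu_\omega - S_\omega(f,g) \leq S_\omega(f_{\lambda^\ast},g_{\lambda^\ast}) - S_\omega(f,g) = \int_{\lambda^\ast}^{1}\frac{-K(f_\lambda,g_\lambda)}{\lambda}\,d\lambda \leq \frac{-K(f,g)}{2},
\end{align*}
so that $K(u(t),v(t))=K(f,g)\leq -2\delta<-\delta$ for every $t\in I$, which is the claim.

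The first paragraph is routine bookkeeping with the conservation laws and the variational characterization. The real content — and the step I expect to be the main obstacle to carry out cleanly — is turning the strict inequality $S_\omega(u_0,v_0)<\mu_\omega$ into a time-\emph{uniform} quantitative lower bound on $-K$. The mechanism is that along the mass-preserving scaling the functional $S_\omega$ strictly decreases from $S_\omega(f_{\lambda^\ast},g_{\lambda^\ast})\geq\mu_\omega$ down to $S_\omega(f,g)$, with derivative $\lambda^{-1}K(f_\lambda,g_\lambda)$, while the bound $-K(f_\lambda,g_\lambda)\leq\lambda^2(-K(f,g))$ keeps the integral controlled by $\tfrac12(-K(f,g))$; conservation of $S_\omega$ then makes the resulting bound independent of $t$. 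The two points needing care are that $(f_\lambda,g_\lambda)$ is an admissible competitor in the definition of $\mu_\omega$ and that $\lambda^\ast\in(0,1)$, both of which use $K(f,g)<0$ together with the fact that for $d=5$ the scaling exponent $d/2$ of the nonlinearity strictly exceeds that of the gradient term — the degeneracy at $d=4$ being exactly why only the weaker blow-up-or-grow-up statement (Theorem \ref{thm1.2}) is available there.
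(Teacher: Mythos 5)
Your proof is correct: the mass-preserving rescaling $(\lambda^{5/2}f(\lambda\cdot),\lambda^{5/2}g(\lambda\cdot))$, the identity $\frac{d}{d\lambda}S_\omega(f_\lambda,g_\lambda)=\lambda^{-1}K(f_\lambda,g_\lambda)$, the location of $\lambda^\ast\in(0,1)$, and the bound $-K(f_\lambda,g_\lambda)\le\lambda^2(-K(f,g))$ all check out, yielding $K(u(t),v(t))\le-2\delta$ uniformly in $t$. This is essentially the standard variational/scaling argument that the paper delegates to \cite{Ham18p}, so no substantive difference from the intended proof.
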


\noindent{\bf $\bullet$ $\dot{H}^{1}$-critical case($d=6$).}
The ground state is characterized by 
\begin{align*}
	E(\phi,\psi)= \inf\{ E(f,g): (f,g) \in \dot{H}^1(\R^6) \times\dot{H}^1(\R^6) \setminus\{(0,0)\}, K(f,g)=0 \}
\end{align*}
where $K=K_{6}$. By using this and variational argument, we get the following lemma, which is similar to Lemma~\ref{lemA.1} in the $\dot{H}^{1/2}$-critical case. 
\begin{lemma}
\label{lemA.2}
Let $d=6$. If $E(u_0,v_0)<E(\phi,\psi)$ and $K(u_0,v_0)<0$, then there exists a positive constant $\delta$ such that $K(u(t),v(t)) < -\delta$ as long as the solution exists.
\end{lemma}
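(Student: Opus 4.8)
The plan is to read Lemma~\ref{lemA.2} as a coercivity statement: the sub-ground-state energy hypothesis should force the Pohozaev functional $K$ not merely to stay negative but to stay uniformly below a fixed negative level. Throughout write $E_c:=E(\phi,\psi)$, $L(t):=\norm{\nabla u(t)}_{L^2}^2+\frac{\kappa}{2}\norm{\nabla v(t)}_{L^2}^2$, and $P(t):=\re\int_{\R^6}\overline{v(t)}\,u(t)^2\,dx$, so that $E(u(t),v(t))=L(t)+P(t)$ and, using the identity $8K=2dE-2(d-4)L$ at $d=6$,
\begin{align*}
K(u(t),v(t))=\tfrac32 E(u(t),v(t))-\tfrac12 L(t).
\end{align*}
First I would record the two conservation laws, $E(u(t),v(t))\equiv E(u_0,v_0)=:E_0<E_c$ and $M(u(t),v(t))\equiv M(u_0,v_0)$, on the maximal existence interval. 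Since $K(u_0,v_0)<0$ forces $(u_0,v_0)\ne(0,0)$, mass conservation gives $M_0>0$ and hence $(u(t),v(t))\ne(0,0)$ for every $t$.

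Next comes a sign-invariance step. The map $t\mapsto K(u(t),v(t))$ is continuous because the solution lies in $C(I;H^1\times H^1)$. I claim it never vanishes: if $K(u(t^*),v(t^*))=0$ at some time, then $(u(t^*),v(t^*))$ is a nonzero admissible competitor in the variational characterization $E_c=\inf\{E(f,g):K(f,g)=0,\ (f,g)\ne(0,0)\}$, whence $E(u(t^*),v(t^*))\ge E_c$, contradicting $E(u(t^*),v(t^*))=E_0<E_c$. Since $K(u_0,v_0)<0$, continuity then yields $K(u(t),v(t))<0$ throughout.

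The heart of the argument is converting this strict negativity into a uniform gap. Fix $t$ and abbreviate $L=L(t)$, $P=P(t)$; from $K(u(t),v(t))=L+\frac32 P<0$ and $L\ge0$ we get $P<0$ and in fact $P^2>\frac49 L^2$. I would rescale only the amplitude, setting $(\tilde u,\tilde v):=(\lambda_0 u(t),\lambda_0 v(t))$ with $\lambda_0:=-\frac{2L}{3P}>0$, chosen precisely so that $K(\tilde u,\tilde v)=\lambda_0^2 L+\frac32\lambda_0^3 P=0$. As $(\tilde u,\tilde v)$ is a nonzero element of $\dot{H}^1\times\dot{H}^1$, the characterization gives
\begin{align*}
E(\tilde u,\tilde v)=\lambda_0^2 L+\lambda_0^3 P=\frac{4L^3}{27P^2}\ge E_c.
\end{align*}
Combining $\frac{4L^3}{27P^2}\ge E_c$ with $P^2>\frac49 L^2$ yields $E_c<\frac{L}{3}$, i.e. $L(t)>3E_c$, a bound uniform in $t$. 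Feeding this into $K(u(t),v(t))=\frac32 E_0-\frac12 L(t)$ gives
\begin{align*}
K(u(t),v(t))<\tfrac32 E_0-\tfrac32 E_c=-\tfrac32(E_c-E_0)=:-\delta,
\end{align*}
with $\delta>0$ independent of $t$, which is the assertion.

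I expect the quantitative step to be the only real obstacle: conservation and sign-invariance are routine, but turning ``$K<0$'' into a uniform separation from $0$ is exactly where the $d=6$ algebra and the variational characterization must be combined, and the amplitude rescaling onto the constraint surface $\{K=0\}$ is the device that makes the self-improving bound $L>3E_c$ fall out. A minor point to verify is that the characterization genuinely applies to the rescaled datum (nonzero, in $\dot{H}^1\times\dot{H}^1$, and on the constraint), and that the chain of inequalities is agnostic to the sign of $E_c$, so that no separate treatment of $E_c\le0$ is required (although in fact $E_c>0$ here).
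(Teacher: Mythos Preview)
Your argument is correct. The paper does not actually supply a proof of Lemma~\ref{lemA.2}; it only records the variational characterization of the ground state and refers to \cite{HaOzTa13,Ham18p}, remarking that the proof is as for the single NLS or the $\kappa=1/2$ case. Your write-up is precisely such a standard variational proof: sign-invariance of $K$ via the minimization description of $E_c$, followed by the amplitude rescaling $\lambda\mapsto(\lambda u,\lambda v)$ onto the constraint $\{K=0\}$ to extract the uniform lower bound $L(t)>3E_c$, and then the algebraic identity $K=\tfrac32 E_0-\tfrac12 L$ to conclude $K(t)<-\tfrac32(E_c-E_0)$. The only points to watch, which you flag yourself, are that $(u(t),v(t))\neq(0,0)$ (mass conservation) so that $L(t)>0$ and the rescaled pair is an admissible competitor, and that the argument does not rely on the sign of $E_c$; both are handled correctly.
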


\begin{acknowledgement}
The authors  would like to thank Professor Hideaki Sunagawa for his useful and important comments, especially informing them of the paper by Yoshida \cite{Yos13}. 
The first author was partially supported by JSPS Grant-in-Aid for Early-Career Scientists JP18K13444. The second author was supported in part by JSPS Grant-in-Aid for Young Scientists (B) JP16K17626. 
\end{acknowledgement}



\begin{thebibliography}{99}

\bibitem{AkNa13}
Takafumi Akahori and Hayato Nawa, \emph{Blowup and scattering problems for the
  nonlinear {S}chr\"{o}dinger equations}, Kyoto J. Math. \textbf{53} (2013),
  no.~3, 629--672. 

\bibitem{ChOz09}
Yonggeun Cho and Tohru Ozawa, \emph{Sobolev inequalities with symmetry},
  Commun. Contemp. Math. \textbf{11} (2009), no.~3, 355--365. 

\bibitem{CoCoOh09}
Mathieu Colin, Thierry Colin, and Masahito Ohta, \emph{Stability of solitary
  waves for a system of nonlinear {S}chr\"{o}dinger equations with three wave
  interaction}, Ann. Inst. H. Poincar\'{e} Anal. Non Lin\'{e}aire \textbf{26}
  (2009), no.~6, 2211--2226. 

\bibitem{Din18p}
Van Duong~Dinh, \emph{Existence, stability of standing waves and the
  characterization of finite time blow-up solutions for a system {NLS} with
  quadratic interaction}, preprint, arXiv:1809.09643, 2018.
  
\bibitem{DuWuZh16}
Dapeng Du, Yifei Wu, and Kaijun Zhang, \emph{On blow-up criterion for the
  nonlinear {S}chr\"{o}dinger equation}, Discrete Contin. Dyn. Syst.
  \textbf{36} (2016), no.~7, 3639--3650. 


\bibitem{Gla77}
Robert~T. Glassey, \emph{On the blowing up of solutions to the {C}auchy problem
  for nonlinear {S}chr\"{o}dinger equations}, J. Math. Phys. \textbf{18}
  (1977), no.~9, 1794--1797. 

\bibitem{Ham18p}
Masaru Hamano, \emph{Global dynamics below the ground state for the quadratic
  {S}ch\"{o}dinger system in $5d$}, preprint, arXiv:1805.12245, 2018.

\bibitem{HaLiOz11}
Nakao Hayashi, Chunhua Li, and Tohru Ozawa, \emph{Small data scattering for a
  system of nonlinear {S}chr\"{o}dinger equations}, Differ. Equ. Appl.
  \textbf{3} (2011), no.~3, 415--426. 

\bibitem{HaOzTa13}
Nakao Hayashi, Tohru Ozawa, and Kazunaga Tanaka, \emph{On a system of nonlinear
  {S}chr\"{o}dinger equations with quadratic interaction}, Ann. Inst. H.
  Poincar\'{e} Anal. Non Lin\'{e}aire \textbf{30} (2013), no.~4, 661--690.

\bibitem{HoRo10}
Justin Holmer and Svetlana Roudenko, \emph{Divergence of infinite-variance
  nonradial solutions to the 3{D} {NLS} equation}, Comm. Partial Differential
  Equations \textbf{35} (2010), no.~5, 878--905. 

\bibitem{InKiNi18p}
Takahisa Inui, Nobu Kishimoto, and Kuranosuke Nishimura, \emph{Scattering for a
  mass critical {NLS} system below the ground state with and without
  mass-resonance condition}, preprint, arXiv:1810.07904, 2018.

\bibitem{OgTs91}
Takayoshi Ogawa and Yoshio Tsutsumi, \emph{Blow-up of {$H^1$} solution for the
  nonlinear {S}chr\"{o}dinger equation}, J. Differential Equations \textbf{92}
  (1991), no.~2, 317--330. 

\bibitem{OzSu13}
Tohru Ozawa and Hideaki Sunagawa, \emph{Small data blow-up for a system of
  nonlinear {S}chr\"{o}dinger equations}, J. Math. Anal. Appl. \textbf{399}
  (2013), no.~1, 147--155. 

\bibitem{Yos13}
Naruhiko Yoshida, 
Master Thesis, Osaka University, (2013).
\end{thebibliography}
\end{document}